\documentclass[11pt]{amsart}
\usepackage[english]{babel}
\usepackage{amsmath}
\usepackage{amsthm}
\usepackage{amssymb}
\usepackage{amscd}
\usepackage{latexsym}
\usepackage[all]{xy}
\xyoption{all}
\newtheorem{lemma}{Lemma}[section]
\newtheorem{theorem}[lemma]{Theorem}
\newtheorem{corollary}[lemma]{Corollary}
\newtheorem{proposition}[lemma]{Proposition}
\newtheorem{remark}[lemma]{Remark}

\theoremstyle{definition}
\newtheorem*{definition}{Definition}
\usepackage{anysize}
\marginsize{2.5cm}{2.5cm}{2.5cm}{3.4cm}

\begin{document}
\title{Submonoids of the Formal Power Series}
\author{Edgar Enochs}
\address{Department of Mathematics, University of Kentucky}
\email{e.enochs@uky.edu}

\author{Overtoun Jenda}
\address{Department of Mathematics and Statistics, Auburn University }
\email{jendaov@auburn.edu}

\author{Furuzan Ozbek}
\address{Department of Mathematics and Statistics, Auburn University }
\email{fzo0005@auburn.edu}

\date{}

\subjclass[2000]{16W60, 13F25}

{\keywords{Formal power series, monoid, strongly closed submonoid, invertible formal power series.}

\begin{abstract}
Formal power series come up in several areas such as formal language theory , algebraic and enumerative combinatorics,  semigroup theory, number theory etc. (\cite{br}, \cite{dz}, \cite{drs}, \cite{mm}, \cite{hw}). This paper focuses on the the subset $x R[[x]]$ consisting of formal power series with zero constant term. This subset forms a monoid with the composition operation on series. We classify the sets $T$ of strictly positive integers for which the set of formal power series $$R[[x^{T}]]=\{\sum_{t\in T} a_{t}x^{t} \, | \, \, where \, \, a_t \in R \}$$
forms a monoid with composition as the operation. We prove that in order for $R[[x^{T}]]$ to be a monoid, $T$ itself has to be a submonoid of $(\mathbb{N}, \cdot)$. Unfortunately, this condition is not enough to guarantee the desired result. But if a monoid is \emph{strongly closed}, then we get the desired result. We also consider an analogous problem for power series in several variables.

\end{abstract}

\maketitle

\section{Introduction}

Let $R$ be a ring with identity and $R[[x]]$ denote the ring of formal power series in the indeterminant $x$ with coefficients in $R$. Throughout the paper we will denote the natural numbers with $\mathbf{N}$ and the non-zero natural numbers with $\mathbf{N}^*$. Then $x R[[x]]= R[[x^{\mathbf{N}^{*}}]]$  will denote the series with zero constant term. The order of a formal power series $f(x)=\sum a_n x^{n}$ is the smallest power of $x$ appearing in the infinite sum and denoted by $\omega(f(x))$ (i.e. $\omega(f(x))=n$ if $a_n\neq 0$ and if $a_i=0$ for all $0 \leq i < n$, and $\omega(0)=\infty$). Note that $R[[x]]$ forms a group with respect to addition but not with composition. In this paper, we focus on $xR[[x]]$ which forms a monoid with composition as the operation. Composition is defined as usual (i.e. $f,g \in x R[[x]]$, $ f \circ g (x) = f(g(x))$). The identity of $x R[[x]]$ is $x$ and its subset consisting of formal power series with $f'(0)$ a unit of $R$ forms a group.

There are some obvious submonoids of $x R[[x]]$. For example if $k \geq 2$, the set
$$U=\{f(x) \in x R[[x]] \, \,  | \, \, f(x)=a_1 x+ a_k x^k + a_{k+1}x^{k+1}+ \, ... \, \} $$
forms a submonoid. If $d \geq 1$, another useful submonoid is
$$V=\{f(x) \in x R[[x]] \, \,  | \, \, f(x)=a_1 x+ a_2 x^{1+d} + a_{3}x^{1+2d}+ \, ... \, \} $$
Note that in each case we can specify the submonoid by considering a subset $T \subset \mathbf{N}^{*}$  and then taking a series in $x^{t}$ with $t \in T$. This set of series will be denoted $R[[x^{T}]]$. Our object is to find all $T \subset \mathbf{N}^{*}$ such that $R[[x^T]]$ is a submonoid of $ R[[x^{\mathbf{N}^{*}}]]$. In order for $R[[x^T]]$ to be a submonoid of $ R[[x^{\mathbf{N}^{*}}]]$ there are two necessary conditions. Since we want $x \in R[[x^T]]$, we must have $1 \in T$. Also if $s,t \in T$ then $x^s \circ x^t = x^{st}$ must be in $R[[x^{T}]]$, so we must have $st \in T$. That is $T$ must be a submonoid of the multiplicative monoid $\mathbf{N}^{*}$.

\begin{remark} If $R[[x^T]]$ is a submonoid of $ R[[x^{\mathbf{N}^{*}}]]$ then $T$ is a submonoid of $\mathbf{N}^{*}$ with respect to multiplication.
\end{remark}

The above remark gives a necessary condition for our goal but it is not sufficient. To see this consider the multiplicative submonoid $T=\{ 1,2,4,6,...\}$ of $\mathbf{N}^*$. Note that the corresponding series $R[[x^T]]$ is not a submonoid, since $x^2 \circ (x+x^2) \notin R[[x^T]]$. So we need a stronger condition on $T$ to guarantee that $R[[x^T]]$ is a submonoid of $R[[x^{\mathbf{N}^{*}}]]$.

\section{Main Result}
In the previous section we concluded that $T$ being a multiplicative submonoid is not sufficient for our goal. The following theorem gives us equivalent sufficient conditions on $T$ for $R[[x^T]]$ to be a submonoid of $R[[x^{\mathbf{N}^{*}}]]$.

\begin{theorem} If $T \subset \mathbf{N}^*$, then the following are equivalent:
\begin{description}
  \item[a)] $R[[x^T]]$ is a submonoid of $R[[x^{\mathbf{N}^{*}}]]$ for any ring $R$.
  \item[b)] $1 \in T$ and if $s,t_{1},t_{2}, ... , t_{k} \in T$ then for any partition $s=s_1+s_2+...+s_k$ we have $s_{1}t_{1}+...+s_{k}t_{k} \in T$.
  \item[c)] $1 \in T$ and if $s,t \in T$ then $s+t-1 \in T$.
\end{description}
\label{main}\end{theorem}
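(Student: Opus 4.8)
The plan is to establish the cycle of implications $(b)\Rightarrow(a)\Rightarrow(c)\Rightarrow(b)$. Throughout I would pass to the shifted set $S:=\{\,t-1\mid t\in T\,\}\subseteq\mathbf{N}$; condition $(c)$ says precisely that $0\in S$ and $S$ is closed under addition, i.e.\ that $S$ is a submonoid of $(\mathbf{N},+)$, and the implication $(c)\Rightarrow(b)$ will reduce to elementary arithmetic inside $S$. The ring-theoretic content is concentrated in the other two implications.

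For $(b)\Rightarrow(a)$: take arbitrary $f(x)=\sum_{s\in T}a_sx^s$ and $g(x)=\sum_{t\in T}b_tx^t$ in $R[[x^T]]$ and expand
$$f(g(x))=\sum_{s\in T}a_s\,g(x)^s=\sum_{s\in T}a_s\sum_{t_1,\dots,t_s\in T}b_{t_1}\cdots b_{t_s}\,x^{t_1+\cdots+t_s}.$$
Every exponent occurring in $f\circ g$ thus has the form $t_1+\cdots+t_s$ with $s\in T$ and $t_1,\dots,t_s\in T$, and applying $(b)$ to the partition of $s$ into $s$ parts equal to $1$ with these $t_i$'s gives $t_1+\cdots+t_s\in T$. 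Hence $f\circ g\in R[[x^T]]$; since $1\in T$ also supplies the identity $x\in R[[x^T]]$, the set $R[[x^T]]$ is a submonoid for every ring $R$.

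For $(a)\Rightarrow(c)$: the identity $x=x^1$ must lie in $R[[x^T]]$, whose members are supported on $T$, so $1\in T$. Fix $s,t\in T$; if $t=1$ then $s+t-1=s\in T$, so assume $t\ge2$. I specialize to the polynomial ring $R=\mathbf{Z}[b]$ and take $f(x)=x^s$ and $g(x)=x+bx^t$, both in $R[[x^T]]$ because $1,s,t\in T$. Then
$$f(g(x))=(x+bx^t)^s=\sum_{j=0}^{s}\binom{s}{j}b^{\,j}\,x^{\,s+j(t-1)},$$
and since $t\ge2$ the exponents $s+j(t-1)$ are pairwise distinct, so the coefficient of $x^{s+t-1}$ is exactly $\binom{s}{1}b=sb$, which is nonzero in $\mathbf{Z}[b]$. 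As $f\circ g\in R[[x^T]]$ by hypothesis, we get $s+t-1\in T$. This is the step that needs care: the coefficient $sb$ must not cancel, and that is exactly why the hypothesis quantifies over all rings $R$---over $\mathbf{Z}/s\mathbf{Z}$ the integer $s$ vanishes and a careless numerical choice of coefficients over an arbitrary ring could collapse as well, whereas over the free polynomial ring $\mathbf{Z}[b]$ no such coincidence occurs.

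For $(c)\Rightarrow(b)$: here $1\in T$ and $S$ is a submonoid of $(\mathbf{N},+)$. Given $s,t_1,\dots,t_k\in T$ and a partition $s=s_1+\cdots+s_k$ with each $s_i\ge1$, write
$$\textstyle\sum_{i=1}^{k}s_it_i-1=\sum_{i=1}^{k}s_i(t_i-1)+(s-1).$$
Each $t_i-1\in S$, hence $s_i(t_i-1)\in S$ (a sum of $s_i$ copies), hence $\sum_i s_i(t_i-1)\in S$; and $s-1\in S$. Therefore the left-hand side lies in $S$, i.e.\ $\sum_i s_it_i\in T$, which is $(b)$. This closes the cycle, and the only nonroutine point is the coefficient-noncancellation observation above.
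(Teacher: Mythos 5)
Your proof is correct, and its architecture differs from the paper's in one substantive place. The paper proves (a)$\Leftrightarrow$(b) and (b)$\Leftrightarrow$(c): to get (b) from (a) it composes $x^s$ with $x^{t_1}+\cdots+x^{t_k}$ over $\mathbf{Z}$ and invokes the multinomial theorem, using that all coefficients are nonnegative so the coefficient of $x^{s_1t_1+\cdots+s_kt_k}$ cannot cancel when like terms are collected. You instead run the cycle (b)$\Rightarrow$(a)$\Rightarrow$(c)$\Rightarrow$(b) and extract (c) from (a) directly by composing $x^s$ with $x+bx^t$; since $t\ge 2$ makes the exponents $s+j(t-1)$ pairwise distinct, no collecting-like-terms argument is needed and only the coefficient $\binom{s}{1}$ matters (in fact $R=\mathbf{Z}$ with $g=x+x^t$ already suffices, since $\binom{s}{1}=s\neq 0$ in $\mathbf{Z}$, so the free variable $b$ is an unnecessary precaution). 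This is essentially the device the paper itself uses later, in the theorem on invertible elements, where $(x+x^s)\circ(x+x^t)$ plays the same role. Your (c)$\Rightarrow$(b) via the shifted set $S=T-1$ is the same computation as the paper's rewriting $s_1t_1+\cdots+s_kt_k=s+s_1(t_1-1)+\cdots+s_k(t_k-1)$, packaged through the correspondence the paper only records afterwards in Proposition~\ref{bijcor}; and your (b)$\Rightarrow$(a) matches the paper's, except that you expand a general composite series directly (using only the all-ones-partition case of (b)) rather than reducing to polynomials via limits --- in either version the finiteness of the contributions to each power of $x$ is left implicit. What your arrangement buys is that the multinomial bookkeeping is confined to the purely combinatorial step (c)$\Rightarrow$(b), and the ring-theoretic extraction from (a) becomes a one-coefficient observation.
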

\begin{proof} (a $ \Rightarrow $ b) As we noted earlier, $x \in R[[x^T]]$ so we must have $1 \in T$.

 Let $R=\mathbb{Z}$ and $s,t_{1},t_{2}, ... , t_{k} \in T$. Then $x^{t_1}+...+x^{t_{k}} $, $x^{s}$ are in $R[[x^{T}]]$ and by our assumption so is $x^{s} \circ (x^{t_1}+...+x^{t_{k}} )= (x^{t_1}+...+x^{t_{k}} )^s$. When we expand $(x^{t_1}+...+x^{t_{k}} )^s$ using multinomial theorem we see that all coefficients are non-negative integers. One such coefficient is $\binom{s}{s_1,...,s_k} $ where $s=s_1+...+s_k$, i.e. the coefficient of $x^{s_1 t_1  +...+s_k t_k}$ in the expansion. So when we collect like terms the coefficient of $x^{s_1 t_1 +...+s_k t_k}$ is a strictly positive integer. That is, $s_1 t_1  +...+s_k t_k$ must be in $T$.

 (b $ \Rightarrow $ a) Since $1 \in T$ we have $x \in R[[x^T]]$ for any $R$. So we only need to show closure of $R[[x^T]]$ for composition. Note that $R[[x^T]]$ is closed under addition, multiplication by $a \in R$ and taking limits, we will have closure for composition if we can show that $x^s \circ ( a_1 x^{t_1}+...+a_k x^{t_{k}})= (a_1x^{t_1}+...+a_k x^{t_{k}})^s  \in R[[x^{T}]]$ whenever $s,t_1,...,t_k \in T$ for some $k\geq 1 $ and $a_1,...,a_k \in R$. Now it is easy to see that if we multiply $(a_1x^{t_1}+...+a_k x^{t_{k}})^s $ out and collect like terms, the only possible powers of $x$ with non-zero coefficient are those of the form $s_1 t_1  +...+s_k t_k$ for some partition $s=s_1+...+s_k$ of $s$.

  (b $ \Rightarrow $ c) By assumption $1 \in T$. Let $s,t$ be in T. Then $t+s-1=1\cdot t + \overbrace{ 1 \cdot 1 + ... + 1 \cdot 1}^{s-1}$. So we use the partition $s=1+...+1$ and the fact that $t,1,...,1 \in T$ to get $t+s-1$ is in $T$.

   (c $ \Rightarrow $ b) Again we get $1 \in T$. Given $s,t_1,...,t_k \in T$ and any partition $s=s_1+...+s_k$, we rewrite $s_1 t_1+...+s_k t_k = (s_1+...+s_k)+s_1(t_1-1)+...+s_k(t_k-1)=s+s_1(t_1-1)+...+s_k(t_k-1)$ and a repeated application of (c) s-many times gives the desired result.
  \end{proof}

\begin{definition} A subset $T \subset \mathbf{N^{*}}$ satisfying the equivalent conditions of Theorem~\ref{main} is said to be a \emph{strongly closed submonoid} of $\mathbf{N}^{*}$ (or simply strongly closed).\end{definition}

\section{Properties of Strongly Closed Submonoids}

In this section we investigate properties of strongly closed submonoids and the relationship between the strongly closed submonoids of $\mathbf{N}^{*}$ and the submonoids of $\mathbf{N}$ with respect to addition.

 \begin{proposition} The subset $T \subset \mathbf{N^{*}}$ is strongly closed if and only if $S=\{t-1 \, | \, t \in T \} $ is a submonoid of $\mathbf{N}$ (with respect to addition). \label{bijcor}
 \end{proposition}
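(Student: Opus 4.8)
The plan is to prove the equivalence directly from condition (c) of Theorem~\ref{main}, which characterizes strongly closed submonoids. First I would set up the obvious candidate bijection: define $\varphi \colon T \to S$ by $\varphi(t) = t-1$. Since $T \subset \mathbf{N}^*$, each $t \ge 1$, so $t - 1 \in \mathbf{N}$, and $\varphi$ is visibly a bijection onto $S = \{t-1 \mid t \in T\}$ as a set; the content is entirely in whether $S$ is closed under addition and contains $0$.

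For the forward direction, suppose $T$ is strongly closed, so $1 \in T$ and $s,t \in T \Rightarrow s+t-1 \in T$. Then $0 = 1 - 1 \in S$, giving the identity. For closure, take $a, b \in S$, so $a = s-1$ and $b = t-1$ with $s,t \in T$; then $a + b = (s-1) + (t-1) = (s+t-1) - 1$, and since $s+t-1 \in T$ by (c), we get $a+b \in S$. Hence $S$ is a submonoid of $(\mathbf{N}, +)$.

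For the converse, suppose $S$ is a submonoid of $(\mathbf{N},+)$. Then $0 \in S$, so $0 = t - 1$ for some $t \in T$, forcing $t = 1 \in T$. Given $s, t \in T$, the elements $s - 1$ and $t - 1$ lie in $S$, so their sum $(s-1)+(t-1) = (s+t-1) - 1$ lies in $S$, which means $s + t - 1 \in T$. Thus $T$ satisfies condition (c), hence is strongly closed. I do not anticipate a serious obstacle here — the proposition is essentially a repackaging of condition (c) via the shift $t \mapsto t - 1$, and the only thing to be slightly careful about is that $S$ genuinely sits inside $\mathbf{N}$ (which needs $1 \in T$, equivalently $0 \in S$, so the two "identity" bookkeeping steps are where one should not be cavalier). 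The algebraic identity $(s-1) + (t-1) = (s+t-1) - 1$ is the single computation doing all the work, and it is symmetric, so both implications fall out of it immediately.
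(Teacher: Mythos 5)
Your proof is correct and follows essentially the same route as the paper: both directions reduce to condition (c) of Theorem~\ref{main} via the identity $(s-1)+(t-1)=(s+t-1)-1$, with the identity-element bookkeeping ($1\in T \Leftrightarrow 0\in S$) handled the same way. The only cosmetic difference is that the paper phrases the converse by building $T=1+S$ from a given submonoid $S$, whereas you argue directly with $S=\{t-1\mid t\in T\}$, which if anything matches the stated proposition more literally.
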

 \begin{proof} Let $T \subset \mathbf{N^{*}}$ be strongly closed. Since $1 \in T$ we have $1-1=0 \in S$. If $s_1=t_1-1, \, s_2=t_2-1$ are in $S$, where $t_1,t_2 \in T$, then notice that $\underbrace{t_1+t_2-1}_{\in T}-1=t_1-1+t_2-1=s_1+s_2 \in S$, that is $S$ is a submonoid of $\mathbf{N}$.

 Now let $S \subset \mathbf{N}$ be a submonoid with respect to addition and let $T=\{1+s \, | \, s \in S\}$. Then since $0 \in S$ we have $1+0=1 \in T$. If $t_1=1+s_1, t_2=1+s_2 \in T$, where $s_1,s_2 \in S$, then $t_1+t_2-1=1+\underbrace{s_1+s_2}_{\in S} \in T$.
 \end{proof}

By proposition~\ref{bijcor}, we conclude that there is a bijective correspondence between strongly closed submonoids of $\mathbf{N}^{*}$ and submonoids of $(\mathbf{N},+)$. This characterization leads us to the next result which shows that strongly closed submonoids are always finitely generated.
\begin{definition} Let $X \subset \mathbf{N}^{*}$, then the strong submonoid generated by $X$ is defined to be the smallest strongly closed submonoid containing $X$.
\end{definition}

\begin{proposition}Let $X \subset \mathbf{N}^{*}$, then the strong submonoid generated by $X$ has the form
$$T=\{1+\displaystyle\sum_{a\in X} u_a(a-1) \, | \, u_a \geq 0 \text{ and } u_a=0 \text{ for all but finitely many a's} \}$$
\end{proposition}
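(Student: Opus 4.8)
The plan is to reduce everything to the additive picture supplied by Proposition~\ref{bijcor}. First I would record that the two mutually inverse maps $T\mapsto\{t-1\mid t\in T\}$ and $S\mapsto\{1+s\mid s\in S\}$ of Proposition~\ref{bijcor} are inclusion-preserving in both directions: if $T\subset T'$ are strongly closed then $\{t-1\mid t\in T\}\subset\{t-1\mid t\in T'\}$, and likewise for the inverse map on submonoids of $(\mathbf N,+)$. Hence this bijection carries the smallest strongly closed submonoid containing a set $X$ to the smallest additive submonoid of $\mathbf N$ containing $X':=\{a-1\mid a\in X\}$; that is, it carries the strong submonoid generated by $X$ to the submonoid of $(\mathbf N,+)$ generated by $X'$. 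So it suffices to describe the latter explicitly and then translate back along $t=1+s$.

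Second, I would invoke (or quickly verify) the standard description of the submonoid of $(\mathbf N,+)$ generated by an arbitrary subset $Y\subset\mathbf N$, namely that it equals
$$M=\Big\{\sum_{y\in Y}u_y\,y \ \Big|\ u_y\ge 0,\ u_y=0\text{ for all but finitely many }y\Big\},$$
with the empty sum interpreted as $0$. The check is routine: $M$ contains $0$ and $Y$ and is closed under addition, so it is an additive submonoid containing $Y$; conversely any additive submonoid containing $Y$ contains every finite sum of elements of $Y$, hence contains $M$. Applying this with $Y=X'=\{a-1\mid a\in X\}$ yields that the additive submonoid generated by $X'$ is $S=\{\sum_{a\in X}u_a(a-1)\mid u_a\ge 0,\ u_a=0\text{ for all but finitely many }a\}$.

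Finally I would translate back via Proposition~\ref{bijcor}: $T=\{1+s\mid s\in S\}$ is precisely the strong submonoid generated by $X$, and this set is exactly the one displayed in the statement. As a consistency check, setting all $u_a=0$ gives $1\in T$, and setting a single $u_a=1$ gives $a\in T$, so indeed $X\subset T$ as required. I do not anticipate a real obstacle here; the only points needing care are that the correspondence of Proposition~\ref{bijcor} is genuinely order-preserving in both directions — so that "smallest containing $X$" is matched with "smallest containing $X'$" — and that $X$, and therefore $Y$, may be infinite, which is why the sums must be restricted to finitely many nonzero coefficients.
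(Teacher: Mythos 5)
Your proof is correct, but it takes a different route from the paper. The paper argues directly: it first checks that the displayed set $T$ is strongly closed (the sum $u+v-1=1+\sum_{a\in X}(u_a+v_a)(a-1)$ is again of the required form), and then shows minimality by iterating condition (c) of Theorem~\ref{main} to get $a_1,\dots,a_k\in\tilde T\Rightarrow a_1+\cdots+a_k-(k-1)\in\tilde T$, which forces every $1+\sum_a u_a(a-1)$ into any strongly closed $\tilde T\supset X$. You instead treat Proposition~\ref{bijcor} as an inclusion-preserving bijection between strongly closed submonoids of $\mathbf N^*$ and submonoids of $(\mathbf N,+)$, transport the generation problem across it, and quote the standard description of the additive submonoid generated by $X-1$ as the set of finitely supported nonnegative combinations. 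This is legitimate: your explicit construction of $S$ and the check that $1+S$ is strongly closed, contains $X$, and sits inside every strongly closed submonoid containing $X$ also settles the existence of the ``smallest'' object, so there is no circularity in invoking it. In effect you prove the content of the paper's Corollary~\ref{bijgen} (generators correspond under $T=1+S$) first and deduce the proposition from it, whereas the paper proves the proposition directly and then reads off the corollary; your route makes the result a formal consequence of the correspondence and a standard fact about $(\mathbf N,+)$, while the paper's direct computation keeps the argument self-contained and makes visible exactly how condition (c) is iterated.
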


\begin{proof} First, let us prove that $T$ is strongly closed. Clearly $1 \in T$ and $X \subset T$. Suppose $u=1+\displaystyle\sum_{a\in X} u_a(a-1)$ and $v=1+\displaystyle\sum_{a\in X} v_a(a-1)$ are in $T$, then $u+v-1=1+\displaystyle\sum_{a\in X} (u_a+v_a)(a-1) \in T$ since only finitely many of $u_a,v_a's$ are non-zero and $u+v-1$ is in the desired form.

Now we need to prove that $T$ is the smallest strongly closed submonoid containing $X$. Let $\tilde{T}$ be another strongly closed submonoid containing $X$. Notice that by successive applications of part (c) of Theorem~\ref{main} we get,
$$a_1,...,a_k \in \tilde{T} \, \Rightarrow \, a_1+a_2+...+a_k-(k-1)\in \tilde{T}$$
So we conclude that for any non-negative integer $u_a$ and $a \in X$,  $1+\displaystyle\sum_{a\in X} u_a(a-1)$ is in $\tilde{T}$.

\end{proof}

\begin{corollary} There is a bijective correspondence between the set of submonoids $S \subset \mathbf{N}$ (with respect to addition) and the strong submonoids $T \subset \mathbf{N}^{*}$. The correspondence is given by $T=1+S$.
 Moreover, if $T$ is generated by $X$ then $S$ is generated by the set $Y=X-1=\{a-1 \, | \, a \in X\}$.\label{bijgen}
\end{corollary}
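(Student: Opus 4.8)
The plan is to establish the bijective correspondence by assembling pieces that are essentially already in hand from Proposition~\ref{bijcor} and the preceding proposition on strong submonoids generated by a set. First I would verify that the two assignments $S \mapsto T = 1+S := \{1+s \mid s\in S\}$ and $T \mapsto S = T-1 := \{t-1 \mid t\in T\}$ are well-defined maps between the two collections: Proposition~\ref{bijcor} already shows that $T$ is strongly closed if and only if $T-1$ is an additive submonoid of $\mathbf{N}$, which gives both directions of well-definedness at once. Then I would observe that these two maps are mutually inverse as operations on subsets of the relevant ambient sets: $(1+S)-1 = S$ and $1+(T-1) = T$ hold as plain set identities, since every element of a strongly closed $T$ is at least $1$ (because $1\in T$ and $T$ is closed under $s,t\mapsto s+t-1$, so all elements exceed $0$). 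This yields the claimed bijection.

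For the ``moreover'' clause, I would argue that the correspondence $T = 1+S$ carries generating sets to generating sets. Suppose $T$ is the strong submonoid generated by $X$. By the preceding proposition, $T = \{1 + \sum_{a\in X} u_a(a-1) \mid u_a\geq 0,\ \text{almost all } u_a = 0\}$. Subtracting $1$, the corresponding $S = T-1$ is exactly $\{\sum_{a\in X} u_a(a-1) \mid u_a \geq 0,\ \text{almost all } u_a=0\}$, which is precisely the additive submonoid of $\mathbf{N}$ generated by $Y = X-1 = \{a-1 \mid a\in X\}$. Conversely, if $S$ is the additive submonoid generated by $Y$, then running Proposition~\ref{bijcor}'s construction together with the explicit form shows $1+S$ is the strong submonoid generated by $1+Y = X$. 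The two statements are symmetric under the bijection, so it suffices to prove one direction and invoke the inverse.

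I do not expect a serious obstacle here; the proof is a bookkeeping exercise that glues together results already proved. The one point requiring a little care is making sure the ``almost all zero'' finiteness condition is handled consistently when passing between the explicit descriptions of $T$ and $S$ — but since subtraction of the constant $1$ does not interact with the summation, the finite-support condition transfers verbatim. A second minor point is confirming that $X$ itself maps onto $Y$ under the bijection (i.e.\ that no element of $X$ equals $0$, which is automatic since $X\subset\mathbf{N}^*$, so $Y\subset\mathbf{N}$ as required). With these checks in place the corollary follows immediately.
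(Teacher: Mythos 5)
Your proposal is correct and follows essentially the same route as the paper, which simply invokes Proposition~\ref{bijcor} (together with the explicit description of the strong submonoid generated by $X$) for this corollary; you have merely spelled out the bookkeeping that the paper leaves implicit. One tiny remark: the fact that every element of $T$ is at least $1$ needs no argument about closure, since $T\subset\mathbf{N}^*$ by definition.
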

\begin{proof} The result follows from the proposition~\ref{bijcor}.
\end{proof}

\begin{corollary} Every strongly closed submonoid $T\subset \mathbf{N}^*$ is finitely generated as a closed submonoid.
\end{corollary}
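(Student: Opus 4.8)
The plan is to deduce this corollary directly from the preceding results, reducing the claim about strongly closed submonoids of $\mathbf{N}^*$ to the classical fact that every submonoid of $(\mathbf{N},+)$ is finitely generated. First I would invoke Corollary~\ref{bijgen}: given a strongly closed submonoid $T \subset \mathbf{N}^*$, the set $S = T - 1 = \{t-1 \mid t \in T\}$ is a submonoid of $(\mathbf{N},+)$, and by that corollary if $S$ is generated (as an additive monoid) by a set $Y$, then $T$ is generated as a strongly closed submonoid by $Y+1 = \{y+1 \mid y \in Y\}$. So it suffices to show that $S$ is finitely generated as an additive submonoid of $\mathbf{N}$.

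Next I would establish that every submonoid $S$ of $(\mathbf{N},+)$ is finitely generated. If $S = \{0\}$ this is trivial. Otherwise let $d = \gcd(S\setminus\{0\})$; since $S$ is closed under addition, a standard argument (Bézout, plus the fact that all sufficiently large multiples of $d$ lie in $S$ — essentially the numerical semigroup / Chicken McNugget phenomenon) shows that $S$ contains every sufficiently large multiple of $d$, say all multiples $md$ with $m \ge M$. Then $S$ is contained in $\{0, d, 2d, \dots\}$ and differs from the full set of multiples of $d$ in only finitely many places below $Md$; hence $S$ is the union of a finite set with a "tail" $\{md : m \ge M\}$, and one checks directly that $S$ is generated by the finite set $(S \cap [0, (M+1)d])$. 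Alternatively, and more cleanly: the complement within $\langle S \rangle \cap \mathbf{N}$ — no, simplest is to just note $S$ is generated by its finitely many elements below $2 \cdot (\text{smallest nonzero element})$ or cite that $\mathbf{N}$ is Noetherian as a commutative monoid. In any case this is the one genuinely substantive step, and it is entirely classical.

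Finally I would assemble the pieces: taking a finite generating set $Y$ for $S$ in $(\mathbf{N},+)$ and applying Corollary~\ref{bijgen}, the finite set $X = Y + 1 \subset \mathbf{N}^*$ generates $T$ as a strongly closed submonoid, which is exactly the assertion. The only real obstacle is proving finite generation of additive submonoids of $\mathbf{N}$; everything else is bookkeeping through the bijection $T \leftrightarrow S = T-1$ already set up in Proposition~\ref{bijcor} and Corollary~\ref{bijgen}. I would expect the write-up to be very short, essentially one sentence reducing to the $(\mathbf{N},+)$ case plus a reminder of why that case holds.
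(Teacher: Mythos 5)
Your proof is correct and takes essentially the same route as the paper: reduce via Corollary~\ref{bijgen} (i.e.\ the bijection $T \leftrightarrow S=T-1$) to the classical fact that every submonoid of $(\mathbf{N},+)$ is finitely generated, which the paper simply cites and you sketch via the standard gcd/large-multiples argument. (Only your throwaway alternative bound, generation by the elements below twice the smallest nonzero element, is false in general --- e.g.\ the monoid generated by $5,7,9,11,13$ --- but your main argument does not rely on it.)
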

\begin{proof} The result follows from corollary~\ref{bijgen} and the fact that every submonoid of $(\mathbf{N}, +)$ is finitely generated. Hence, by the bijective correspondence one can get finite generating sets for the corresponding strongly closed submonoids.
\end{proof}

Surprisingly enough a strongly closed submonoid $T \subset \mathbf{N}^*$ turns out to be an infinitely generated monoid with respect to multiplication.

\begin{proposition} Let $T\subset \mathbf{N}^*$ be a strongly closed submonoid, then $T$ is an infinitely generated monoid with mutliplication as the operation.
\end{proposition}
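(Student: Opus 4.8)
The plan is to compare how fast $T$ grows with how fast a finitely generated multiplicative submonoid of $\mathbf{N}^*$ is allowed to grow, and derive a contradiction from the mismatch. First I would invoke Corollary~\ref{bijgen} to write $T = 1 + S$ for a submonoid $S$ of $(\mathbf{N},+)$. If $T = \{1\}$ the statement degenerates (the trivial monoid is generated by the empty set), so I assume $S$ contains some integer $a \ge 1$. Since $S$ is closed under addition, $ka \in S$ for every $k \ge 0$, and hence the whole arithmetic progression $\{\,1+ka : k \ge 0\,\}$ lies in $T$ (equivalently, this follows by iterating part (c) of Theorem~\ref{main} on $1+a$). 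Consequently, if $N_T(x)$ denotes the number of elements of $T$ that are $\le x$, then $N_T(x) \ge (x-1)/a$, so $T$ grows at least linearly.

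Next I would suppose, for contradiction, that $T = \langle g_1,\dots,g_n\rangle$ as a monoid under multiplication. After discarding any generator equal to $1$, I may assume $g_i \ge 2$ for each $i$. Then every element of $T$ has the form $g_1^{e_1}\cdots g_n^{e_n}$ with $e_i \ge 0$, and such a product is $\le x$ only if $e_1+\dots+e_n \le \log_2 x$; in particular each exponent $e_i$ ranges over a set of size at most $1+\log_2 x$. Hence $N_T(x) \le (1+\log_2 x)^n$.

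Finally I would combine the two estimates: $(x-1)/a \le (1+\log_2 x)^n$ for all $x$ is impossible as $x \to \infty$, which is the desired contradiction; therefore $T$ admits no finite multiplicative generating set. The only step with any real content is the first one — that a strongly closed submonoid necessarily contains an entire arithmetic progression $1+a\mathbf{N}$, which forces its counting function to be linear; everything afterward is the routine observation that a finitely generated multiplicative monoid grows only polylogarithmically. I do not anticipate a serious obstacle, but one must take care to exclude the degenerate case $T=\{1\}$ and to state the counting bound cleanly.
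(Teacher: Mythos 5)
Your proof is correct, but it takes a genuinely different route from the paper. The paper also starts from the arithmetic progression forced by strong closure (taking $a\in T$, $a>1$, it notes $a+k(a-1)\in T$ for all $k\ge 0$), but then invokes Dirichlet's theorem on primes in arithmetic progressions: since $\gcd(a,a-1)=1$, infinitely many primes lie in that progression, and any prime in $T$ must belong to every multiplicative generating set, so no finite set can generate $T$. You replace Dirichlet by an elementary counting argument: the progression gives $N_T(x)\ge (x-1)/a$, while a multiplicative monoid generated by $n$ elements $\ge 2$ satisfies $N_T(x)\le (1+\log_2 x)^n$ (since $g_1^{e_1}\cdots g_n^{e_n}\le x$ forces $e_1+\cdots+e_n\le \log_2 x$), and linear growth beats polylogarithmic growth. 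Both arguments are sound; yours is more self-contained and avoids a deep number-theoretic input, whereas the paper's gives the extra structural information that $T$ contains infinitely many primes, every one of which must occur in any generating set. You are also right to flag the degenerate case $T=\{1\}$, which the paper's proof silently excludes by assuming an element $a\neq 1$ exists; strictly speaking the proposition needs that hypothesis in either treatment.
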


\begin{proof} Assume that $T$ is a strongly closed submonoid and let $a \in T$ be an element different from $1$. So $a >1$, then $a+k(a-1) \in T$
 for any $k \geq 0$. But $\gcd(a,a-1)=1$, and by Dirichlet's theorem (see \cite{dir}) there are infinitely many primes among $a+k(a-1) $. So any such prime must be in the set of generators of $T$ as a monoid (w.r.t. multiplication). We conclude that there is no finite generating set of $T$ w.r.t. multiplication. \end{proof}

\section{Generalization to $n$-variable case}

In this section we consider our problem in higher dimension. Let $n \geq 2$ and consider the ring $R[[x_1,...,x_n]]$ of formal power series in $x_1,x_2...,x_n$ with coefficients in $R$. If we consider n-tuples $f=(f_1,...,f_n)$ where $f_i \in R[[x_1,...,x_n]]$ 
 and we define composition by

\begin{equation}\label{1}
(f_1,...,f_n) \circ (g_1,...,g_n)=(f_1(g_1,...,g_n),...,f_n(g_1,...,g_n))
\end{equation}
then we get a monoid $R[[x_1,...,x_n]]^n$ with the identity $(x_1,...,x_n)$.

For any $u=(u_1,...,u_n) \in \mathbf{N}^{n}$, we denote $x^{u}=x_{1}^{u_1}x_{2}^{u_2}...x_{n}^{u_n}$. Our problem in this section will be to find the subsets $U \subset (\mathbf{N}^{n})^*=\mathbf{N}^{n}-(0,...,0)$ such that $f=(f_1,...,f_n)$ with $f_i \in R[[x^U]]$ (the formal power series in $x^{u}$ with $u \in U$) form a submonoid of $R[[x^{(\mathbf{N}^{n})^*}]]^n$. Since $(x_1,...,x_n)$ is the identity with respect to multiplication in n-dimension, we want each $x_i$ to be in $R[[x^U]]$ for all $i=1,...,n$. Note that with the previous notation $x^{e_i}=x_i$ where $e_i=(0,...,0,1,0,...,0)$ with only non-zero entry at the i-th slot, then each $e_i$ must be in $U$ for all $i=1,...,n$.

For $u=(u_1,...,u_n)$ in $\mathbf{N}^n$, we define its norm as $|u|=u_1+...+u_n$. Note that for $u,v \in \mathbf{N}^n$ , $|u+v|=|u|+|v|$. Now we are ready to give the analogous result for the n-variable case.

\begin{proposition} Let $R[[x^U]]^{n}$ be a submonoid of $R[[x^{(\mathbf{N}^{n})^*}]]^n$ with the composition operation defined as in ($1$). If $u \in U$ and $v \in (\mathbf{N}^{n})^*$ such that $|v|=|u|$, then $v\in U$. \label{abs}
\end{proposition}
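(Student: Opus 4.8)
The plan is to imitate the implication (a~$\Rightarrow$~b) in the proof of Theorem~\ref{main}: substitute a cleverly chosen tuple into a monomial and read off the multinomial expansion. Throughout I would work over $R=\mathbb{Z}$, just as there; this is harmless because (as with Theorem~\ref{main}, and as the remark at the end explains) the hypothesis is naturally read as asserting the submonoid property for \emph{every} ring $R$, and $\mathbb{Z}$ is the convenient choice since no combinatorial coefficient can vanish in it.

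First, since $e_1,\dots,e_n\in U$, the linear form $\sigma:=x_1+\dots+x_n=\sum_{i=1}^n x^{e_i}$ is a finite sum of monomials $x^{e_i}$ and hence lies in $R[[x^U]]$. Next, given $u=(u_1,\dots,u_n)\in U$, I would form the two tuples
$$f:=(x^u, x_2, x_3, \dots, x_n),\qquad g:=(\sigma, \sigma, \dots, \sigma),$$
both of which lie in $R[[x^U]]^n$: the first because $u,e_2,\dots,e_n\in U$, the second because $\sigma\in R[[x^U]]$. By the assumed closure of $R[[x^U]]^n$ under the composition~(\ref{1}), the tuple $f\circ g$ lies in $R[[x^U]]^n$, so in particular its first coordinate
$$(f\circ g)_1=f_1(\sigma,\dots,\sigma)=\sigma^{u_1}\sigma^{u_2}\cdots\sigma^{u_n}=\sigma^{|u|}=(x_1+\dots+x_n)^{|u|}$$
is an element of $R[[x^U]]$.

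Now expand by the multinomial theorem: $(x_1+\dots+x_n)^{|u|}=\sum_{|v|=|u|}\binom{|u|}{v_1,\dots,v_n}x^{v}$, the sum ranging over all $v=(v_1,\dots,v_n)\in\mathbf{N}^n$ with $v_1+\dots+v_n=|u|$. Over $R=\mathbb{Z}$ every coefficient $\binom{|u|}{v_1,\dots,v_n}=|u|!/(v_1!\cdots v_n!)$ is a strictly positive integer, so after collecting like terms each $x^v$ with $|v|=|u|$ still occurs with a nonzero coefficient in this element of $\mathbb{Z}[[x^U]]$; therefore $v\in U$. Finally $|v|=|u|\ge 1$ because $u\in(\mathbf{N}^n)^*$, so every such $v$ automatically lies in $(\mathbf{N}^n)^*$, and the proposition follows.

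The one point that needs care is the appeal to the multinomial coefficients being visibly nonzero, and this is exactly where passing to $\mathbb{Z}$ (equivalently, knowing the submonoid property for all $R$) is essential: over a ring of positive characteristic the statement can fail. For instance, with $n=2$ and $R=\mathbb{F}_2$ the set $U=\{e_1,e_2\}\cup\{(a,b):a,b\ \text{even},\ (a,b)\neq(0,0)\}$ gives a submonoid $\mathbb{F}_2[[x^U]]^2$ (essentially because squaring is additive in characteristic $2$), with $(2,0)\in U$ but $(1,1)\notin U$ even though $|(2,0)|=|(1,1)|$.
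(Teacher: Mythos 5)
Your proof is correct and follows essentially the same route as the paper: substitute the tuple $(x_1+\dots+x_n,\dots,x_1+\dots+x_n)$ into the monomial $x^u$, obtain $(x_1+\dots+x_n)^{|u|}\in\mathbb{Z}[[x^U]]$, and read off from the multinomial expansion over $\mathbb{Z}$ that every $v$ with $|v|=|u|$ lies in $U$. Your closing $\mathbb{F}_2$ example is a nice extra observation justifying the passage to $R=\mathbb{Z}$ (the paper likewise reads the hypothesis as holding ``for each $R$'' and then specializes to $\mathbb{Z}$), but the core argument is the same as the paper's.
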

\begin{proof} First we conclude that $x_1,...,x_n,$  $g=x_1+...+x_n \in R[[x^U]]$. Let $u=(u_1,...,u_n) \in U$, then $f=x_1^{u_1}...x_n^{u_n} \in R[[x^U]]$  and since $R[[x^U]]^{n}$ is a submonoid of $R[[x^{(\mathbf{N}^{n})^*}]]^n$ we conclude,
$$f \circ \overbrace{(g,...,g)}^{n-tuple}=f(g,...,g)=g^{u_1}...g^{u_n}=g^{u_1+...+u_n} \in R[[x^U]]$$
So we need $g^{|u|}=(x_1+...+x_n)^{|u|}$ to be in $R[[x^U]]$ for each $R$. In particular, if $R=\mathbb{Z}$, and $|u|=v_1+...+v_n$ is another partition of $|u|$ then using  the multinomial theorem on $(x_1+...+x_n)^{|u|}$ we get the term $\binom{|u|}{v_1,...,v_n} x_1^{v_1}...x_n^{v_n}$. Since all the coefficients in this expansion of $(x_1+...+x_n)^{|u|}$ are non-negative integers, we see that $v=(v_1,...,v_n)$ must be in $U$.
\end{proof}

Furthermore, there is an intrinsic relationship between the set $U$ of n-tuples and strongly closed submonoids as explained in details (theorem~\ref{n-tuple}). But we need following lemma in order to prove this relationship.

\begin{lemma}Let $T$ be a strongly closed submonoid of $\mathbf{N}^*$ and $U=\{u \in (\mathbf{N}^{n})^* \, | \, |u| \in T\}$. Then $f(g_1,...,g_n) \in R[[x^U]]$ for all $f,g_1,g_2,...,g_n \in R[[x^{U}]]$ if $(x^{u_1}+...+x^{u_m})^{|v|} \in \mathbb{Z}[[x^U]]$ for all $u_1,...,u_m,v \in U$.\label{reduced}
\end{lemma}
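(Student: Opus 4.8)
The plan is to reduce the general statement about composition $f(g_1,\dots,g_n)$ to the special monomial-power hypothesis by the same kind of linearity-and-limits argument used in the proof of (b $\Rightarrow$ a) in Theorem~\ref{main}. First I would record that $R[[x^U]]$ is closed under addition, under scalar multiplication by elements of $R$, and under passage to limits in the $(x_1,\dots,x_n)$-adic topology; these are immediate from the definition of $R[[x^U]]$ as the series supported on exponent vectors in $U$. Consequently, to prove $f(g_1,\dots,g_n)\in R[[x^U]]$ for all $f,g_1,\dots,g_n\in R[[x^U]]$ it suffices to treat the case where $f$ is a single monomial $x^{u}=x_1^{u_1}\cdots x_n^{u_n}$ with $u=(u_1,\dots,u_n)\in U$, and then $f(g_1,\dots,g_n)=g_1^{u_1}\cdots g_n^{u_n}$. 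Expanding further, each $g_i\in R[[x^U]]$ is an $R$-linear combination (possibly infinite, handled by the limit closure) of monomials $x^{w}$ with $w\in U$, so by distributing the product $g_1^{u_1}\cdots g_n^{u_n}$ and again using closure under sums and scalars, the whole question comes down to showing that a product of the form $\prod (x^{w})$ over $|u|=u_1+\dots+u_n$ factors, with each $w\in U$, lies in $R[[x^U]]$.

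The next step is to organize that product. Grouping the $|u|$ factors, such a product equals $\left(x^{u_1}\right)^{c_1}\cdots$ — more precisely, it is a product of $|u|$ monomials $x^{w^{(1)}},\dots,x^{w^{(|u|)}}$ with each $w^{(j)}\in U$, whose exponent is $w^{(1)}+\dots+w^{(|u|)}$. I want to show this sum lies in $U$, i.e. that its norm lies in $T$. Here is where the hypothesis enters: by hypothesis $(x^{u_1}+\dots+x^{u_m})^{|v|}\in\mathbb{Z}[[x^U]]$ for all $u_1,\dots,u_m,v\in U$. Taking $m$ large enough to list all the distinct $w^{(j)}$ appearing, and taking $v$ with $|v|=|u|$ (which exists in $U$ since $u\in U$ and, by Proposition~\ref{abs}-type reasoning, $U$ contains everything of the right norm; in fact $|u|\in T$ gives $u\in U$ tautologically and we only need the norm), the multinomial expansion of $(x^{w^{(1)}}+\dots+x^{w^{(m)}})^{|u|}$ contains, with a strictly positive integer coefficient, the monomial $x^{w^{(1)}+\dots+w^{(|u|)}}$ coming from the partition that picks each listed $w^{(j)}$ the appropriate number of times. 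Since that expansion lies in $\mathbb{Z}[[x^U]]$, the exponent $w^{(1)}+\dots+w^{(|u|)}$ must lie in $U$, which is exactly what we needed; the coefficient it actually receives in $g_1^{u_1}\cdots g_n^{u_n}$ is then a legitimate element of $R$ times that, hence the whole series is supported on $U$.

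The main obstacle I anticipate is purely bookkeeping: making the reduction from ``$f$ arbitrary'' to ``$f$ a monomial'' and then to ``a single product of $|u|$ factors from $U$'' fully rigorous in the presence of infinitely many monomials in the $g_i$, so that the limit/continuity argument is clean and the exponent appearing is genuinely of the claimed additive form. One must check that when $g_1^{u_1}\cdots g_n^{u_n}$ is multiplied out and like terms are collected, every power of $x$ that occurs with nonzero coefficient is indeed of the form $w^{(1)}+\dots+w^{(|u|)}$ with all $w^{(j)}\in U$ — this is the analogue of the observation in the proof of (b $\Rightarrow$ a) that the only surviving powers are $s_1t_1+\dots+s_kt_k$ — and then the hypothesis on $\mathbb{Z}[[x^U]]$ does the rest via the positivity of multinomial coefficients. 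Everything else is routine.
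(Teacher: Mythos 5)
Your proposal is correct and takes essentially the same route as the paper's proof: reduce via closure under addition, $R$-scalar multiplication and limits to the case where $f=x^{u}$ is a monomial and the $g_i$ are finite $R$-combinations of monomials $x^{w}$ with $w\in U$, then observe that every exponent occurring in $g_1^{u_1}\cdots g_n^{u_n}$ is a sum of $|u|$ elements of $U$ and therefore appears with a strictly positive multinomial coefficient in $(x^{u_1}+\cdots+x^{u_m})^{|u|}\in\mathbb{Z}[[x^{U}]]$, forcing that exponent into $U$. The paper argues exactly this way (phrasing the surviving exponents as $t_1u_1+\cdots+t_mu_m$ for a partition $|v|=t_1+\cdots+t_m$), so no further comparison is needed.
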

\begin{proof} Let $f,g_1,g_2,...,g_n \in R[[x^{U}]]$, we will show that $f(g_1,...,g_n) \in R[[x^U]]$. It suffices to argue this holds for $f=x^{v}$ for some $v=(v_1,...,v_n) \in U$ as $R[[x^{U}]]$ is closed under addition, scalar multiplication and taking limits. So we want $g_1^{v_1}...g_n^{v_n} \in R[[x^{U}]]$. Since $R[[x^{U}]]$ is closed under limits, we may assume that $g_i$'s are polynomials in $R[[x^U]]$ and say a linear combination of $x^{u_1},...,x^{u_m}$ for some $u_1,...,u_m \in U$ with coefficients in $R$. Let $g_i=a_1^{i}x^{u_1}+...+a_m^{i}x^{u_m}$ for $i=1,...,n$.

Note that if the coefficient of $x^{w}$ in the product $g_1^{v_1}...g_n^{v_n}$ is zero, then $w$ does not have to be in $U$. Now assume that the coefficient of $x^{w}$ is non-zero. But then we claim that the coefficient of $x^w$ in $(x^{u_1}+...+x^{u_m})^{|v|} \in \mathbb{Z}[[x]]$ is non-zero as well. This follows by the multinomial theorem because if the coefficient of $x^{w}$ is non-zero in the product $g_1^{v_1}...g_n^{v_n}$  then there must be a partition of $v$ say $|v|=t_1+...+t_m$, so that $w=u_1t_1+...+u_mt_m$. But this means $(x^{u_1}+...+x^{u_m})^{|v|}=\binom{|v|}{t_1,...,t_m} x^{w}+... $ in $\mathbb{Z}[[x^U]]$ and that the coefficient of $x^w$ is non-zero in  $(x^{u_1}+...+x^{u_m})^{|v|}$. Hence $w \in U$.
\end{proof}

\begin{theorem} $R[[x^U]]^{n}$ is a submonoid of $R[[x^{(\mathbf{N}^{n})^*}]]^n$ with the composition operation given as in ($1$)if and only if the set defined by $T=\{|u| \, | \, u \in U\}$ is a strongly closed submonoid of $\mathbf{N}^*$. \label{n-tuple}
\end{theorem}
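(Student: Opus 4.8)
The plan is to prove the two implications separately, using Proposition~\ref{abs} to pin down the shape of $U$ and Lemma~\ref{reduced} to transfer the closure question to the one-variable Theorem~\ref{main}.

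For the forward implication I would assume $R[[x^U]]^n$ is a submonoid and verify condition (c) of Theorem~\ref{main} for $T=\{|u|\,:\,u\in U\}$. Since each $x_i=x^{e_i}$ must lie in $R[[x^U]]$ we have $e_i\in U$, hence $1=|e_i|\in T$. Now fix $s,t\in T$; the case $t=1$ is immediate because then $s+t-1=s\in T$, so assume $t\ge 2$ and choose $u\in U$ with $|u|=s$ and $w\in U$ with $|w|=t$ (so $w\ne e_i$ for every $i$). Pick an index $i_0$ with $u_{i_0}\ge 1$ — one exists since $u\ne 0$ — and substitute into $f=x^u$ the $n$-tuple $(g_1,\dots,g_n)$ defined by $g_{i_0}=x_{i_0}+x^{w}$ and $g_i=x_i$ for $i\ne i_0$; this tuple lies in $R[[x^U]]^n$ because $e_{i_0},w\in U$. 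The composite equals $(x_{i_0}+x^{w})^{u_{i_0}}\prod_{i\ne i_0}x_i^{u_i}$, and over $R=\mathbb{Z}$ its binomial expansion contains the monomial $x^{\,u-e_{i_0}+w}$ with coefficient $u_{i_0}\ne 0$; this term survives because the exponents $u-j(e_{i_0}-w)$, $0\le j\le u_{i_0}$, are pairwise distinct when $w\ne e_{i_0}$. Hence $u-e_{i_0}+w\in U$, so $s+t-1=|u-e_{i_0}+w|\in T$, and $T$ is strongly closed. As a by-product, Proposition~\ref{abs} also gives $U=\{v\in(\mathbf{N}^n)^*\,:\,|v|\in T\}$, the normalization of $U$ used below.

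For the reverse implication I would start from a strongly closed $T$ together with $U=\{v\in(\mathbf{N}^n)^*\,:\,|v|\in T\}$. Since $1\in T$ each $e_i\in U$, so $x_i\in R[[x^U]]$ and only closure under composition remains. By Lemma~\ref{reduced} it suffices to show $(x^{u_1}+\cdots+x^{u_m})^{|v|}\in\mathbb{Z}[[x^U]]$ for all $u_1,\dots,u_m,v\in U$. Expanding by the multinomial theorem, a typical monomial is $\binom{|v|}{t_1,\dots,t_m}\,x^{\,t_1u_1+\cdots+t_mu_m}$ with $t_1+\cdots+t_m=|v|$, and its exponent $w=\sum_i t_iu_i$ has norm $|w|=\sum_i t_i|u_i|$. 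Since $|v|\in T$, each $|u_i|\in T$, and $|v|=t_1+\cdots+t_m$ is a partition of $|v|$, condition (b) of Theorem~\ref{main} gives $|w|\in T$; moreover $|w|\ge|v|\ge 1$, so $w\ne 0$ and thus $w\in U$. Hence every monomial of $(x^{u_1}+\cdots+x^{u_m})^{|v|}$ has exponent in $U$, Lemma~\ref{reduced} yields closure under composition, and $R[[x^U]]^n$ is a submonoid.

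The step I expect to be the main obstacle is the construction in the forward direction: one has to build a single composition whose expansion exhibits a monomial of norm exactly $s+t-1$. Substituting a tuple of the form $(\dots,x_{i_0}+x^{w},\dots)$ into $x^u$ does the job precisely because it collapses the coordinates of $x^u$ into the single power $(x_{i_0}+x^{w})^{u_{i_0}}$, whose linear term carries the exponent $u-e_{i_0}+w$ of norm $|u|-1+|w|=s+t-1$; verifying that this term does not cancel (which needs $w\ne e_{i_0}$, i.e.\ $t\ge 2$) is the only delicate point. The reverse direction is then bookkeeping via Lemma~\ref{reduced} and condition (b) of Theorem~\ref{main}; the one thing to remember is that it genuinely uses $U=\{v:|v|\in T\}$, not merely that $\{|u|:u\in U\}$ is strongly closed.
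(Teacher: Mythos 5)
Your proof is correct. The reverse implication is exactly the paper's argument: reduce via Lemma~\ref{reduced} to showing $(x^{u_1}+\cdots+x^{u_m})^{|v|}\in\mathbb{Z}[[x^U]]$, expand by the multinomial theorem, and use condition (b) of Theorem~\ref{main} on the norms $s_1|u_1|+\cdots+s_m|u_m|$. Where you diverge is the forward implication: the paper first invokes Proposition~\ref{abs} to replace $u$ and $v$ by $(|u|,0,\dots,0)$ and $(|v|,0,\dots,0)$, so that the whole computation collapses to the one-variable binomial expansion $(x_1+x_1^{|u|})^{|v|}=x_1^{|v|}+|v|\,x_1^{|u|+|v|-1}+\cdots$ over $\mathbb{Z}$, exhibiting $(|u|+|v|-1,0,\dots,0)\in U$; you instead work with the original $u,w\in U$, substitute $g_{i_0}=x_{i_0}+x^{w}$ (and $g_i=x_i$ elsewhere) into $x^{u}$, and extract the monomial $x^{u+w-e_{i_0}}$ with coefficient $u_{i_0}\neq 0$, the non-cancellation coming from the distinctness of the exponents $u+j(w-e_{i_0})$ when $w\neq e_{i_0}$ (your case split $t\geq 2$). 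Both are valid; the paper's normalization makes the exponent bookkeeping trivial at the cost of quoting Proposition~\ref{abs}, while yours is self-contained in the forward direction but needs the slightly more careful distinctness check. Your closing remark about the converse genuinely requiring the saturated set $U=\{v:|v|\in T\}$, rather than just $\{|u|:u\in U\}$ being strongly closed, is well taken: the paper's converse proof silently makes the same assumption, and together with Proposition~\ref{abs} (as you note) this is what makes the biconditional coherent.
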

\begin{proof}
First, let us assume that $U \subset (\mathbf{N}^{n})^*$ is such that $R[[x^U]]^{n}$ is a submonoid of $R[[x^{(\mathbf{N}^{n})^*}]]^n$. Since $e_1 \in U$, we have $|e_1|=1 \in T$. Now let $|u|,|v| \in T$ where $u,v \in U$. Since $(|u|,0,...,0)$ has the same absolute value as $u$, by proposition~\ref{abs} we conclude that $(|u|,0,...,0) \in U$. Similarly $(|v|,0,...,0)\in U$. So $x^{e_1}+x^{(|u|,0,...,0)}=x_1+x_1^{|u|} \in R[[x^U]]$. Substituting, $x_1+x_1^{|u|}$ for $x$, in $x^{(|v|,0,...,0)}=x_1^{|v|}$ we get,
\begin{eqnarray*}
(x_1+x_1^{|u|})^{|v|} &=& x_1^{|v|}+|v|x_1^{|v|-1}x_1^{|u|}+...\\
 &=& x_1^{|v|}+|v|x_1^{|u|+|v|-1}+...
\end{eqnarray*}
Letting $R=\mathbb{Z}$ we see that we must have $(|u|+|v|-1,0,...,0) \in U$. So its absolute value, i.e. $|u|+|v|-1$, must be in $T$ and $T$ is strongly closed.

To show the converse, assume that $T \subset \mathbf{N}^*$ is a strongly closed submonoid. Let $U=\{u \in (\mathbf{N}^{n})^* \, | \, |u| \in T \}$. Since $1 \in T$ and since $|e_i|=1$ for $0 \leq i \leq n$, we have $e_1,...,e_n \in U$. So $x^{e_i}=x_i \in R[[x^U]]$ for $i=1,...,n$.

By lemma~\ref{reduced}, $f(g_1,...,g_n) \in R[[x^U]]$ for all $f,g_1,g_2,...,g_n \in R[[x^{U}]]$ if $(x^{u_1}+...+x^{u_m})^{|v|} \in \mathbb{Z}[[x^U]]$ for all $u_1,...,u_m,v \in U$. So we just need to prove that  $(x^{u_1}+...+x^{u_m})^{|v|} \in \mathbb{Z}[[x^U]]$ whenever $u_1,...,u_m,v \in U$. Assume we are given  $u_1,...,u_m,v \in U$ by multionomial theorem, it suffices to show that $s_1 u_1+...+s_m u_m \in U$ for any partition $|v|=s_1+...+s_m$. Notice that, this is the case if and only if $|s_1u_1+...+s_m u_m |=s_1|u_1|+...+s_m|u_m| \in T$, but that holds as $T$ is strongly closed. So we conclude that $f(g_1,...,g_n) \in R[[x^U]]$.\end{proof}

\section{The invertible elements of $R[[x^{T}]]$}
We turn our attention to the invertible elements of $R[[x]]$ with respect to composition. Throughout this section, invertibility will be understood with respect to composition. It is easy to see that a formal power series $f$ is invertible if and only if its constant term is zero and $f'(0)$ is invertible in $R$. We show that for an arbitrary $T \subseteq \mathbf{N}^{*}$ the subset consisting of the invertible elements of $ R[[x^{T}]]$  forms a subgroup if and only if $T$ is strongly closed.

\begin{theorem}Let $T \subseteq \mathbf{N}^*$. Then the subset $H \subset R[[x^{T}]]$ of invertible elements forms a group with composition as the operation for any ring $R$ if and only if $T$ is strongly closed.
\end{theorem}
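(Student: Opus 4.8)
The plan is to establish the two implications separately, using condition (c) of Theorem~\ref{main} as the working description of ``strongly closed''.

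For the ``only if'' direction, suppose that for every ring $R$ the set $H$ of invertible elements of $R[[x^T]]$ is a group. A group must contain an identity, and the identity for composition is $x$; hence $x\in H\subseteq R[[x^T]]$, which forces $1\in T$. To check condition (c), take $s,t\in T$; if $s=1$ or $t=1$ then $s+t-1\in\{s,t\}\subseteq T$, so assume $s,t\ge 2$ and work over $R=\mathbb{Z}$. Then $f=x+x^{s}$ and $g=x+x^{t}$ lie in $\mathbb{Z}[[x^T]]$ with linear coefficient $1$, hence in $H$; since $H$ is closed under composition, $f\circ g=(x+x^{t})+(x+x^{t})^{s}\in\mathbb{Z}[[x^T]]$. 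Expanding by the binomial theorem, the coefficient of $x^{s+t-1}$ in $f\circ g$ is $s\neq 0$, and one checks that (because $s,t\ge 2$) no other term of the expansion nor the summand $x+x^{t}$ lands on that power. Hence $s+t-1\in T$, so $T$ is strongly closed.

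For the ``if'' direction, assume $T$ is strongly closed. By Theorem~\ref{main}, $R[[x^T]]$ is a submonoid of $R[[x^{\mathbf{N}^{*}}]]$ for every ring $R$; in particular $x\in R[[x^T]]$ and $R[[x^T]]$ is closed under composition. Since the composition of two invertible series is invertible (its linear coefficient is a product of units), $H$ is closed under composition and contains $x$, so $H$ is a submonoid of the group of all invertible elements of $xR[[x]]$. Everything thus reduces to closure under inversion: given $f\in H$, one must show its compositional inverse $f^{-1}$ again lies in $R[[x^T]]$.

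To prove this I would write $f=a_{1}x+\phi$ with $a_{1}$ a unit of $R$ and $\phi=\sum_{t\in T,\,t\ge 2}a_{t}x^{t}\in R[[x^T]]$, note that $f\circ g=x$ is equivalent to $g$ being a fixed point of $\Psi(h)=a_{1}^{-1}x-a_{1}^{-1}(\phi\circ h)$, and observe that $\Psi$ maps $R[[x^T]]$ into itself (using closure under composition, scalar multiplication and addition). A routine order estimate, based on $\omega(h_{1}^{t}-h_{2}^{t})\ge\omega(h_{1}-h_{2})+(t-1)$ for $t\ge 2$, shows $\Psi$ is an $x$-adic contraction; since $R[[x^T]]$ is closed under limits it is complete, so $\Psi$ has a unique fixed point $g\in R[[x^T]]$, and $f\circ g=x$ together with invertibility of $f$ forces $g=f^{-1}$. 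Hence $f^{-1}\in H$ and $H$ is a group. The only real obstacle is this last point --- that $f^{-1}$ is ``supported on $T$'' --- and the successive-approximation argument is what makes it work, its sole ingredients being that $R[[x^T]]$ is a composition-closed, limit-closed subset of $R[[x]]$ containing $x$.
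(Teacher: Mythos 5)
Your proposal is correct. The ``only if'' half is essentially the paper's own argument: over $R=\mathbb{Z}$, compose $x+x^{s}$ with $x+x^{t}$ and read off that the coefficient of $x^{s+t-1}$ is $\binom{s}{1}=s\neq 0$ while no other term of the expansion (nor $x+x^{t}$ itself) hits that exponent, so $s+t-1\in T$. The ``if'' half, however, takes a genuinely different route. The paper shows $f^{-1}=\sum b_{n}x^{n}\in R[[x^{T}]]$ by induction on $n$: truncating $f\circ f^{-1}=x$ modulo $x^{n+1}$, it writes the coefficient of $x^{n}$ as $a_{1}b_{n}$ plus the coefficient of $x^{n}$ of a series already known (by the induction hypothesis together with closure under composition) to lie in $R[[x^{T}]]$, so $b_{n}\neq 0$ forces $n\in T$. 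You instead build the inverse inside $R[[x^{T}]]$ directly, as the fixed point of the contraction $\Psi(h)=a_{1}^{-1}x-a_{1}^{-1}(\phi\circ h)$ on the $x$-adically complete set $R[[x^{T}]]$, and then identify the fixed point with $f^{-1}$ by uniqueness of inverses in the ambient monoid. Both arguments lean on Theorem~\ref{main} to know that $R[[x^{T}]]$ is composition-closed; the paper's induction obtains the support statement coefficient by coefficient with elementary bookkeeping, while your Banach-fixed-point argument trades that bookkeeping for the cleaner and slightly more general observation that any composition-closed, limit-closed subset of $xR[[x]]$ containing $x$ automatically contains the compositional inverses of its invertible members. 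The key estimate $\omega(h_{1}^{t}-h_{2}^{t})\geq\omega(h_{1}-h_{2})+t-1$ is valid because $h_{1},h_{2}$ have zero constant term and every exponent occurring in $\phi$ is at least $2$ (if $R$ is not commutative, use the telescoping expansion of $h_{1}^{t}-h_{2}^{t}$ rather than the naive factorization), so the contraction step is sound.
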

\begin{proof} Assume that the subset $H \subset R[[x^{T}]]$ of invertible elements forms a group. Then $x \in H$ implies $1 \in T$. So we only need to show that if $s,t \in T$ then $s+t-1 \in T$. This is trivial if $s=1$ or $t=1$, so assume $s,t \geq 2$. Let $R=\mathbb{Z}$, then both $x+x^t, x+x^s \in \mathbb{Z}[[x^T]]$ are invertible and,
 $$(x+x^s)\circ (x+x^t)= x+x^t + (x+x^t)^s \in H \subseteq \mathbb{Z}[[x^T]]$$
We have,
$$ (x+x^t)^s=x^s +\binom{s}{1} x^{s-1+t}+\binom{s}{2} x^{s-2+2t}+ \, ...$$
Note that since $s,t \geq 2$ we have $s < s-1+t < s-2+2t < \, ...$. So the exponent $s+t-1$ appears in $(x+x^t)^s$. It doesn't appear in $x+x^t$, so it appear in $x+x^t + (x+x^t)^s$, hence $s+t-1 \in T$.

Now we assume that $T$ is strongly closed. It is enough to show that if $f \in R[[x^{T}]]$ is invertible then $f^{-1}$ is in $R[[x^{T}]]$ as well. Let $T=\{1=t_1 < t_2 <t_3 < ...\}$ and $f(x)=\displaystyle\sum_{t \in T} a_t x^t$ be invertible. Say $f^{-1}(x)=\displaystyle\sum_{n=1}^{\infty}b_n x^n$ , we will show inductively that if $b_n \neq 0$ then $n \in T$. For $n=1$ there is nothing to show since $1 \in T$. So suppose for each of $b_1,...,b_{n-1}$ that either it is $0$ or that when $b_{k} \neq 0$ then $k \in T$ for $1 \leq k \leq n-1$. We have $ f(x) \circ f^{-1}(x)=x$ truncating we get,
$$f(x)\circ (b_1x+...+b_n x^{n}) \cong x \, (\text{mod } x^{n+1}) $$
That is,
$$a_1(b_1x+...+b_n x^{n})+a_{t_{2}} (b_1x+...+b_n x^{n})^{t_2}+...\cong x \, (\text{mod } x^{n+1})   $$
But note that $a_{t_{2}} (b_1x+...+b_n x^{n})^{t_2} \cong a_{t_{2}} (b_1x+...+b_{n-1} x^{n-1})^{t_2} \, (\text{mod } x^{n}) $ since $n \geq 2$.Likewise for $a_{t_{3}} (b_1x+...+b_n x^{n})^{t_3}$ etc. So we have,
$$ a_1(b_1x+...+b_n x^{n})+\underbrace{a_{t_{2}} (b_1x+...+b_{n-1} x^{n-1})^{t_2}+a_{t_{3}} (b_1x+...+b_{n-1} x^{n-1})^{t_3}+ ...}_{\ast}\cong x \, (\text{mod } x^{n+1}) $$
By induction hypothesis with $n \geq 2$, $(b_1x+...+b_{n-1} x^{n-1})^{t_2}$ and $(b_1x+...+b_{n-1} x^{n-1})^{t_3}$ are in $R[[x^T]]$. So $\ast \in R[[x^T]]$.

Now investigating the coefficient of $x^n$ in $a_1(b_1x+...+b_n x^{n})+\ast$, we see that it is $a_1b_n+\text{ the coefficient of } x_n \text{ in } \ast$. But this coefficient must be zero since,
$$a_1(b_1x+...+b_n x^{n})+\ast \cong x  (\text{ mod } x^{n+1}) $$
and since $n \geq 2$.

So if $b_n \neq 0$, then $a_1b_n \neq 0$ since $a_1$ is a unit. That is, $a_1b_n=-\text{ the coefficient of } x_n \text{ in } \ast $ which is then not zero. Now, since $\ast \in R[[x^T]]$ we conclude that $n\in T$. This concludes our induction, hence we get $f^{-1} \in R[[x^T]]$.
\end{proof}

\end{document}